\documentclass[11pt]{article}

\usepackage[a4paper,margin=30mm]{geometry}
\usepackage[T1]{fontenc}
\usepackage{lmodern}
\usepackage{microtype}
\usepackage{amsmath,amssymb,amsthm,mathtools}
\usepackage[hidelinks]{hyperref}
\usepackage[nameinlink]{cleveref}

\newtheorem{theorem}{Theorem}[section]
\newtheorem{lemma}[theorem]{Lemma}

\newtheorem{corollary}[theorem]{Corollary}

\crefname{theorem}{theorem}{theorems}
\crefname{lemma}{lemma}{lemmas}
\crefname{proposition}{proposition}{propositions}
\crefname{corollary}{corollary}{corollaries}
\crefname{remark}{remark}{remarks}
\crefname{conjecture}{conjecture}{conjectures}

\newcommand{\I}{\mathcal I}

\newcommand{\Prob}{\mathbb P}
\newcommand{\1}{\mathbf 1}
\newcommand{\tr}{\operatorname{tr}}

\title{A non-trivial bound for 3AP-intersecting families}

\author{Peter Keevash\thanks{Mathematical Institute, University of Oxford, UK. Supported by ERC Advanced Grant 883810.}}

\date{}

\begin{document}
\maketitle

\begin{abstract}
A family $F$ of subsets of $[n]$ is \emph{3AP-intersecting} if every two
members have intersection containing a non-trivial three-term arithmetic
progression. We prove that there is an absolute constant $c>0$ such that
any such $F$ has size at most $(\tfrac12 - c)2^n$. This is the first non-trivial
progress towards a conjecture of Simonovits and S\'os that 
the maximum possible size is $2^{n-3}$.
More generally, we show the same bound for $H$-intersecting families
whenever $H$ is a $3$-graph on $[n]$ with bounded codegrees.
A clique shows that this is sharp, in that the bounded codegree assumption cannot be removed.
\end{abstract}

\section{Introduction}

Given a hypergraph $H$ and a family $F$ of subsets of a common vertex set $V$,
we say $F$ is \emph{\(H\)-intersecting} if \(A\cap B\) contains an edge of $H$ for every \(A,B\in F\). 
We say that $F$ on $[n]=\{1,\dots,n\}$ is 3AP-intersecting if it is $H$-intersecting
with $H$ being the $3$-graph on $[n]$ whose edges are all non-trivial $3$-term arithmetic progressions (3APs).
A conjecture of Simonovits and S\'os (personal communication to the authors of~\cite{CGFS1986})
states that any 3AP-intersecting family has size at most $2^{n-3}$. This would clearly be sharp,
as shown by the family of all sets containing some fixed 3AP. They also made the analogous conjecture
for triangle-intersecting families of graphs, which can be viewed as an $H$-intersecting problem
in the auxiliary $3$-graph on $V=E(K_n)$ whose edges are all triples $\{ab,bc,ca\}$.
This conjecture was proved using spectral methods by Ellis, Filmus and Friedgut~\cite{EFF2012}. 
However, it seems that their methods cannot work for the 3AP problem.

There is a rich field of related theorems and conjectures on intersection problems, see the survey \cite{E2022}. 
A long-standing conjecture of Alon stated that there is an absolute constant $c>0$ such that 
for any graph $H$ that is not a star-forest, any $H$-intersecting family of graphs on $[n]$
contains at most $\tfrac12 - c$ proportion of all graphs. This was recently proved by Han and Wang~\cite{HanWang2026},
building on a new concentration of measure inequality due to Gillott~\cite{Gillott2026}.
In this note we show that a similar method gives the same type of bound
for the Simonovits-S\'os 3AP-intersection conjecture. More generally,
the argument applies to any $3$-graph $H$ with bounded codegrees.
We write $\Delta_2(H) := \max_{\{x,y\}\subseteq V(H)}  |\{e\in E(H):\{x,y\}\subseteq e\}|\}$,

\begin{theorem}\label{thm:main}
For every $\Delta \in \mathbb{N}$ there is $c_\Delta>0$
such that for any $3$-graph $H$ with $\Delta_2(H)\le\Delta$
and $H$-intersecting family $F$ of subsets of $V(H)$
we have $|F| \le (\tfrac12 - c_\Delta)2^{|V(H)|}$.
\end{theorem}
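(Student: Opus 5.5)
The plan is to turn the $H$-intersecting condition into a statement about $F$ and the complementary family $F^c:=\{V\setminus A : A\in F\}$, and then show that a family of measure close to $\tfrac12$ cannot satisfy it. Write $V=V(H)$ and $n=|V|$. First, replacing $F$ by its up-closure keeps it $H$-intersecting and does not decrease $|F|$, so we may assume $F$ is an up-set. Then $F^c$ is a down-set, and $F\cap F^c=\emptyset$ because $F$ is intersecting. Suppose $|F|\ge(\tfrac12-c)2^n$ with $c$ small. Then $F$ and $F^c$ are disjoint, each has measure at least $\tfrac12-c$, and together they cover all but a $2c$ fraction of $2^V$. The key reformulation is this: for $A,B\in F$ the set $A\cap B=A\setminus(V\setminus B)$ contains an edge. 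Hence
\[
X\in F,\ Y\in F^c \implies X\setminus Y \text{ contains an edge of } H .
\]
In particular, whenever $Y\subseteq X$ with $X\in F$ and $X\setminus Y$ independent in $H$, we get $Y\notin F^c$.

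The goal is a coupling $(X,Y)$ with $Y\subseteq X$ and $X\setminus Y$ independent in $H$, such that both marginals are close to uniform (after a harmless reweighting of levels) and such that $\Prob(X\in F,\ Y\in F^c)$ is bounded below by a constant whenever $F$ and $F^c$ have measure near $\tfrac12$. This contradicts the displayed implication. I would take $X$ uniform and let $Y=X\setminus S$, where $S$ is a random subset of $X$ chosen to be independent in $H$. The bounded codegree is used exactly here. An $s$-subset of $V$ spans in expectation at most about $\Delta s^3/n$ edges, since $H$ has at most $\Delta\binom n2/3$ edges. So a random $s$-set with $s=\varepsilon n^{1/3}$ is independent with probability $1-O(\varepsilon^3\Delta)$. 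More flexibly, one can delete from a random set one vertex from each edge it spans. Either way we obtain a large independent $S$ whose law is close to that of a uniformly random $s$-subset of $X$. To keep the marginal of $Y$ close to that of $X$, I would shift levels: take $X$ from level $n/2+s/2$ and $Y$ from level $n/2-s/2$, and compare with the uniform measure via the LYM/level-averaging that is standard for up-sets.

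The main obstacle is the lower bound on $\Prob(X\in F,\ X\setminus S\in F^c)$. Simple isoperimetry gives the wrong direction here: an up-set of measure $\tfrac12$ can have boundary of order only $1/\sqrt n$ per coordinate, so deleting $n^{1/3}\ll\sqrt n$ random elements might leave $F$ with only small probability. Indeed, a random maximal chain crosses the $2c$-measure gap between $F^c$ and $F$ in about $c\sqrt n$ steps, which is much longer than $n^{1/3}$. This is where I would invoke the concentration inequality of Gillott~\cite{Gillott2026}, in the form used by Han and Wang~\cite{HanWang2026}. It says, roughly, that for a down-set $D$ and up-set $U$ that are disjoint and each of measure close to $\tfrac12$, a random \emph{sparse} perturbation moves from $U$ into $D$ with probability bounded below, uniformly in $n$. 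The point is to use the complementarity $F^c=\{V\setminus A: A\in F\}$: the near-partition of the cube into $F$ and its mirror image forces $F$ to behave like a ``half-space'' in which a small independent deletion crosses the boundary with constant probability. I would first try to apply it with the deletion set $S$ taken to be a union of disjoint vertex pairs. Two vertices never span an edge of a $3$-graph, so $S$ only has to avoid the at most $\Delta$ edges through each pair, which bounded codegree controls. I expect the interaction between this pairing structure and the concentration inequality to be the delicate step.

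Finally, combining the two estimates shows that the assumption $|F|\ge(\tfrac12-c)2^n$ yields some $X\in F$ and $Y\in F^c$ with $X\setminus Y$ independent, contradicting the displayed implication once $c=c_\Delta$ is small enough in terms of $\Delta$. The clique example in the abstract confirms that $c_\Delta$ must depend on $\Delta$.
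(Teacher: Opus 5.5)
Your up-set reformulation (for $X\in F$, $Y\in F^c$, $Y\subseteq X$, the set $X\setminus Y$ must contain an edge) is correct; it is the up-set form of $(F+\I(H))\cap(F+\1)=\varnothing$. You are also right that perturbations of size $n^{1/3}$ are too small and that the relevant scale is $\sqrt n$. But the central step is missing, and the route you sketch cannot supply it. That step is an \emph{independent} $S$ at that scale with $\Prob(X\in F,\ X\setminus S\in F^c)$ bounded below.

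Gillott's inequality gives, for at least $(\tfrac12+\eta)2^n$ points $x$, some product random set $Y_P$ with $\sum_v p_v^2\le K$ and $\Prob(x+Y_P\in F)>K^{3/2}$. To use it you must show that $Y_P$ is ``bad'' with probability less than $K^{3/2}$, uniformly in $n$ and $P$. If ``good'' means independent in $H$, this is false. Take the 3AP hypergraph (about $n^2/4$ edges) and $p_v=\sqrt{K/n}$: then $Y_P$ spans about $K^{3/2}\sqrt n/4$ edges in expectation and is almost surely not independent. This is precisely why Gillott's argument handles 4APs but not 3APs.

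Neither of your patches repairs this:
\begin{itemize}
\item Deleting a vertex from each spanned edge changes $x+Y_P$ in order $\sqrt n$ coordinates, the very scale on which membership in $F$ is decided. So the event $x+Y_P\in F$ is not preserved; for an up-set, shrinking what you add moves in the wrong direction.
\item A union of disjoint pairs is not controlled by codegree. An edge inside $S$ can take its three vertices from three different pairs, so for 3APs at the $\sqrt n$ scale you again expect order $\sqrt n$ such edges.
\end{itemize}

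The paper closes the gap with two ideas absent from your proposal.

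First, the Han--Wang Pl\"unnecke trick. For an $H$-intersecting $F$ of density $\alpha$, disjointness gives $|F+\I(H)|\le\frac{1-\alpha}{\alpha}|F|$. Pl\"unnecke then yields a nonempty $X\subseteq F$ with $|X+\I(H)+\I(H)|\le(\frac{1-\alpha}{\alpha})^2|X|$. Hence it suffices to show $\mu(A+\I(H)+\I(H))\ge\tfrac12+\eta$ for \emph{every} $A$ of density at least $\tfrac12-\eta$. The perturbation then only needs to be a sum of two independent sets, not one.

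Second, $Y\in\I(H)+\I(H)$ whenever $H[Y]$ is $2$-colourable, in particular whenever its incidence graph is a forest. The probability that $H[Y_P]$ contains a Berge cycle is at most $\sum_{\ell\ge2}\tr M^\ell\le(\Delta K)^2/(1-\Delta K)$. Here $M_{xy}=\sqrt{p_xp_y}\sum_{z:xyz\in E(H)}p_z$, and codegree plus Cauchy--Schwarz give $\tr M^2\le\Delta^2K^2$. This $O_\Delta(K^2)$ failure probability is beaten by Gillott's $K^{3/2}$ for small $K$, which finishes the proof. No up-set reduction or level shifting is needed.
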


\begin{corollary}\label{cor:3ap}
There is an absolute constant \(c>0\) such that every
3AP-intersecting family on $[n]$ has size at most $(\tfrac12 - c)2^n$. 
\end{corollary}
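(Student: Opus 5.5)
The statement is \cref{cor:3ap}, and I would deduce it directly from \cref{thm:main}. The plan is to check that the $3$-graph $H$ on $[n]$ whose edges are the non-trivial 3APs has codegrees bounded by an absolute constant. Once that is done, a 3AP-intersecting family is by definition $H$-intersecting with $V(H)=[n]$, so \cref{thm:main} gives $|F|\le(\tfrac12-c_\Delta)2^n$. Taking $c:=c_\Delta$ for that absolute $\Delta$ then finishes the proof.

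The key step is the codegree count. Fix distinct $x<y$ in $[n]$ and count the non-trivial 3APs $\{a,a+d,a+2d\}$ with $d\ge1$ that contain both $x$ and $y$. Such a progression is determined by which two of its three positions $x$ and $y$ occupy.

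\begin{itemize}
\item If they are the first and second terms, then $d=y-x$ and the progression is $\{x,y,2y-x\}$.
\item If they are the second and third terms, the progression is $\{2x-y,x,y\}$.
\item If they are the first and third terms, then $d=(y-x)/2$, which must be an integer, and the progression is $\{x,(x+y)/2,y\}$.
\end{itemize}

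So at most three edges contain $\{x,y\}$, some of them possibly excluded because a term falls outside $[n]$. Hence $\Delta_2(H)\le 3$.

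There is one degenerate case to dispose of. If $n$ is so small that $H$ has no edges, then an $H$-intersecting family must be empty, since for $A\in F$ the set $A\cap A$ would have to contain an edge. The bound therefore holds trivially, and \cref{thm:main} covers it anyway.

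There is no real obstacle here: all the difficulty lies in \cref{thm:main}, which we assume. The only point that needs care is confirming that the constant $c_3$ from \cref{thm:main} does not depend on $n$, which is part of its statement.
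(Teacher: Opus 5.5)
Your proposal is correct and is exactly the deduction the paper intends: the 3AP hypergraph on $[n]$ has $\Delta_2(H)\le 3$, so \cref{thm:main} applied with $\Delta=3$ gives the corollary with $c=c_3$, a constant independent of $n$. Your explicit codegree count, with at most three progressions through any pair $\{x,y\}$, is precisely the check the paper leaves implicit.
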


To see sharpness of Theorem \ref{thm:main} suppose that $H$ is obtained from a complete $3$-graph on $m$ vertices by adding some isolated vertices.
Let $F$ consist of all subsets that intersect the clique in at least $(m+3)/2$ vertices.
Then $F$ has density $1/2 - o_m(1)$ as $m \to \infty$. If a symmetric example is desired
one could let $H$ be a disjoint union of cliques.

\section{Proof strategy}

Let $F$ be an $H$-intersecting family on $V(H)$.
We identify subsets of $V(H)$ with indicator vectors in $\mathbb{F}_2^{V(H)}$.
We write \(\1\) for the all-ones vector and
let $\I(H)$ be the family of all independent sets in $H$.
Then the sumsets $F+\I(H)$ and $F+\{\1\} = \{ V(H) \setminus A: A \in F\}$ are disjoint,
as if we had $A+I=B+\1$ for some $A,B$ in $F$ then $A \cap B \subseteq I$,
which contradicts $F$ being $H$-intersecting and $I$ being independent.
This observation reduces the proof to an isoperimetric statement 
in the Cayley graph on $\mathbb{F}_2^{V(H)}$ generated by $\I(H)$.
It suffices to show that if $F$ has density $p$ close to $1/2$ then it expands
to a set $F+\I(H)$ of density more than $1-p$. This is forced to intersect $F+\1$
simply by size considerations, so such $F$ cannot be $H$-intersecting.

The above viewpoint was well-known to experts, but did not lead to any progress
until recently due to the lack of a suitable isoperimetric statement. The key new
ingredient is the following concentration inequality of Gillott, which he applied to
obtain several new bounds, including the version of Corollary \ref{cor:3ap}
with 3AP replaced by 4AP. The following is a slight adjustment of
\cite[Theorem~3.1]{Gillott2026} that follows from the same proof.
For convenient comparison we retain the nonstandard notation used there,
writing $Y_P$ for the binomial random subset of $[m]$ where
each $\Prob(i \in Y_P)=p_i$ independently.

\begin{lemma}[Gillott]\label{lem:gillott-param}
For any sufficiently small $K>0$ there is  \(\eta=\eta(K)>0\)
such that if \(F\subseteq\mathbb{F}_2^m\) with $|F| \ge (\tfrac12-\eta)2^m$
then for at least $ (\tfrac12+\eta)2^m$ points  \(x\in\mathbb{F}_2^m\) 
there exists \(P=(p_i)\in[0,1]^m\) such that $\sum_i p_i^2\le K$
and $ \Prob(x+Y_P\in F)>K^{3/2}$.
\end{lemma}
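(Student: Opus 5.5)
The plan is to argue by contrapositive with an isoperimetric inequality. First an easy observation: every $x\in F$ is good, since one may take $P=0$, so that $x+Y_P=x\in F$ with probability $1>K^{3/2}$. Write $G$ for the set of good points and $B=\mathbb{F}_2^m\setminus G$ for the bad ones. Then $F\subseteq G$, and it suffices to show that $G\setminus F$ has density at least $2\eta$, i.e.\ that $F$ expands by a constant amount in the sense of ``reachable by a light product perturbation''. I would prove this in two stages: a local stage that handles points with many $F$-neighbours, and a global stage that handles the Hamming-ball-like extremal shapes.

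\emph{Local stage.} Let $x\notin F$ have $d\ge 1$ neighbours $x+e_i$, $i\in S$, lying in $F$. If $d\le 1/K$, put $p_i=\sqrt{K/d}$ on $S$ and $0$ elsewhere. Then $\sum p_i^2=K$, and the probability that $Y_P$ is a single element of $S$ is about $\sqrt{Kd}\,e^{-\sqrt{Kd}}\ge \sqrt K/e>K^{3/2}$ for small $K$. If $d>1/K$, put $p_i=1/d$ on $S$. Then $\sum p_i^2=1/d<K$ and the same event has probability about $1/e$. So the whole vertex boundary of $F$ is good. By itself this is far too weak: a Hamming ball has vertex boundary of density only $O(m^{-1/2})$.

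\emph{Global stage.} The Hamming ball $F=\{|x|\le m/2\}$ shows what to do. For $x$ of weight $m/2+t$, spread $p_i=\lambda\approx\sqrt{K/m}$ over the $1$-coordinates of $x$. This flips about $\sqrt{Km}$ coordinates with Gaussian fluctuations, so it lands in $F$ with constant probability whenever $t\lesssim\sqrt{Km}$. That happens for a $\gtrsim\sqrt K$ fraction of the cube, so $\eta$ should be roughly a power of $K$. For general $F$ I would use Talagrand's convex distance
\[
d_T(x,F)=\sup_{\|\alpha\|_2\le 1}\min_{y\in F}\sum_{i:x_i\ne y_i}\alpha_i ,
\]
whose minimax form gives a probability measure $\nu$ on $F$ with $\|(\nu(y_i\neq x_i))_i\|_2=d_T(x,F)$. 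I would also use the small-distance, density-near-$\tfrac12$ regime of Talagrand's isoperimetry: the boundary form $\mathbb E\sqrt{h_F}\ge c\,\mu(F)(1-\mu(F))$, iterated, shows that $\{x: d_T(x,F)\le\sqrt K\}$ has density at least $\mu(F)+c'\sqrt K$. Choosing $\eta\ll\sqrt K$ then makes this set occupy at least $(\tfrac12+\eta)2^m$ points.

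\emph{Main obstacle.} The hard step is converting the witness $\nu$, an arbitrary distribution on $F$ with small $\ell_2$ flip-profile, into a \emph{product} measure $Y_P$ that hits $F$ with probability at least $K^{3/2}$. My first attempt would be to take $p_i=\nu(y_i\ne x_i)$ and redo Talagrand's induction on dimension with this product-measure functional replacing the convex distance. That is, I would prove by induction on $m$ an inequality of the shape
\[
\mathbb E_x\Big[\exp\big(-\lambda\,\phi_F(x)\big)\Big]\ \text{bounded in terms of } \mu(F),
\]
where $\phi_F(x)$ is the least $\sum p_i^2$ achieving hitting probability above a threshold. The inductive step splits on the last coordinate, mixing the two restrictions $F_0,F_1$ exactly as in Talagrand's proof. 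The loss from requiring independence is what forces the threshold down to $K^{3/2}$ rather than a constant.
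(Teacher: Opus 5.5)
\textbf{Genuine gap.} The paper gives no proof of its own here. It quotes the lemma as a slight variant of Gillott's Theorem~3.1, whose proof rests on a new concentration inequality built for exactly this ``hit $F$ with small probability under a product perturbation'' functional. Your reduction ($F\subseteq G$, so it suffices that $\mu(G\setminus F)\ge 2\eta$), the local-stage computation and the Hamming-ball heuristic are fine. The global stage, however, has two genuine gaps.

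\emph{The convex-distance growth claim is false.} You assert that $\{x: d_T(x,F)\le\sqrt K\}$ has density at least $\mu(F)+c'\sqrt K$. Take the dictator $F=\{x:x_1=0\}$. Every $y\in F$ differs from every $x\notin F$ in coordinate $1$, so taking $\alpha=e_1$ gives $d_T(x,F)=1$. Hence $\{d_T(\cdot,F)\le\sqrt K\}=F$ for every $K<1$. The lemma holds for this $F$ only because $p_1=\sqrt K$ hits $F$ with probability $\sqrt K>K^{3/2}$. Convex distance cannot register this, because its witnesses are measures supported on $F$, i.e.\ they land in $F$ with probability one. The derivation is also a non sequitur: iterating $\mathbb E\sqrt{h_F}\ge c\,\mu(F)(1-\mu(F))$ controls Hamming neighbourhoods of $F$, not convex-distance neighbourhoods. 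At minimum you would need a proved dichotomy between a large vertex boundary and convex-distance growth, and you do not supply one.

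\emph{The main step is left as an unspecified programme.} The step you label the main obstacle is the entire content of the lemma. Talagrand's induction works because the set of witnesses is convex, so witnesses for $F_0$ and $F_1$ can be mixed. A mixture of product measures is not a product measure, so the inductive step does not transfer without a new idea. Moreover, an exponential-moment bound of Talagrand type, such as $\mathbb E_x e^{\lambda\phi_F(x)}\le 1/\mu(F)$, yields only $\mu(\phi_F>t)\le e^{-\lambda t}/\mu(F)$. This exceeds $1$ when $\mu(F)\approx\tfrac12$ and $t$ is small, so it says nothing. To pass from $\mu(F)\ge\tfrac12-\eta$ to $\mu(\phi_F\le K)\ge\tfrac12+\eta$ you need an inequality that is sharp to first order at density $\tfrac12$. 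That is precisely what Gillott's argument supplies and what your proposal does not.
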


To deduce that a family $F$ of density $\tfrac12-\eta$ for small enough $\eta>0$
cannot be 4AP-intersecting, one applies this lemma and size considerations to see
that there is $x' \in F$ such that $x=x'+\1$ has $P$ as in the lemma.
By Cauchy-Schwarz (see \cite[Theorem~3.3]{Gillott2026}),
the probability that $Y_P$ contains a 4AP is at most $K^2 < K^{3/2}$,
so there is nonzero probability that $x''=x+Y_P \in F$ and $Y_P$ contains no 4AP.
But then $x=x'+\1=x''+Y_P$ belongs to $F+\1$ and $F+\I(H)$, so $F$ cannot be $H$-intersecting.

This argument does not handle 3AP-intersecting families or Alon's graph intersection conjecture.
To resolve the latter, Han and Wang \cite{HanWang2026} employ an additional trick 
based on  Pl\"unnecke's inequality, which allows one to replace $\I(H)$ by the sumset $\I(H)+\I(H)$;
the following is a slight adaptation of their argument. For convenient notation we write $\mu$ for uniform measure.

\begin{lemma}\label{lem:additive-reduction}
Suppose that for some \(0<\eta<1/4\), every
\( A\subseteq\mathbb{F}_2^{V(H)}\) with \( \mu(A)\ge1/2-\eta\) 
satisfies \( \mu( A+\I(H)+\I(H))\ge\frac12+\eta\).
Then every \(H\)-intersecting family has density
at most $\tfrac12-\tfrac{\eta}{9}$.
\end{lemma}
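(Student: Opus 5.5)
The plan is to combine the disjointness observation from the proof strategy with the Plünnecke–Ruzsa inequality, which bounds $A+\I(H)+\I(H)$ in terms of $A+\I(H)$. Suppose for contradiction that $F$ is $H$-intersecting with $\mu(F)=p>\tfrac12-\tfrac{\eta}{9}$, and write $p=\tfrac12-\varepsilon$ with $\varepsilon<\eta/9$. Since $F+\I(H)$ is disjoint from $F+\{\1\}$, and the latter has measure $p$, we get $\mu(F+\I(H))\le 1-p=\tfrac12+\varepsilon$. So the doubling ratio
\[
K:=\frac{\mu(F+\I(H))}{\mu(F)}\le\frac{1/2+\varepsilon}{1/2-\varepsilon}\le 1+5\varepsilon
\]
is very close to $1$. (Note that $\emptyset\in\I(H)$, so $F\subseteq F+\I(H)$.) Plünnecke's inequality, in Petridis' form, then gives a nonempty $X\subseteq F$ with $\mu(X+\I(H)+\I(H))\le K^2\mu(X)$.

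The difficulty is that $X$ may be tiny, while the hypothesis only applies to sets of measure at least $\tfrac12-\eta$. I would therefore run the standard greedy extraction. Set $F_0=F$. At step $i$, apply Petridis' lemma to the remaining set $F_i$, whose doubling is at most $K_i:=\mu(F+\I(H))/\mu(F_i)$ because $F_i+\I(H)\subseteq F+\I(H)$. This produces $X_i\subseteq F_i$ with $\mu(X_i+2\I(H))\le K_i^2\mu(X_i)$, and we set $F_{i+1}=F_i\setminus X_i$. We stop as soon as $U=\bigcup_i X_i$ has measure at least $\tfrac12-\eta$, which requires removing only mass about $\tfrac12-\eta$ out of $p$. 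Summing,
\[
\mu(U+2\I(H))\le\sum_i K_i^2\mu(X_i).
\]
While $U$ is still small, the remaining mass $\mu(F_i)$ stays above $p-\mu(U)$. The hypothesis applied to $U$ gives $\mu(U+2\I(H))\ge\tfrac12+\eta$, and we want this to contradict the displayed upper bound.

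The main obstacle is exactly this accounting. Near the end of the extraction the remaining mass is only about $\eta-\varepsilon$, so $K_i$ blows up, and the naive sum $\approx(1-p)^2\bigl(1/\mu(F_{\rm end})-1/p\bigr)$ is useless. To fix this I would exploit the slack differently. First I would try running the extraction on the complement side, or stopping the extraction early (say once $\mu(F_i)\le p-(\tfrac12-\eta)-\eta/3$, which is where the assumption $\eta<1/4$ and the constant $9$ should enter). Then I would bound the leftover piece $R=F_{\rm end}$ trivially by $\mu(R+2\I(H))$ being small relative to the gain. Alternatively, I would apply the hypothesis not to $U$ but to $F$ itself, writing
\[
F+2\I(H)\subseteq (U+2\I(H))\cup (R+2\I(H)).
\]
I would then use Petridis' stronger conclusion that $X$ also minimises $\mu(X+\I(H))/\mu(X)$, to get a bound of the shape $\mu(F+2\I(H))\le(1+O(\varepsilon))\cdot\tfrac12+O(\varepsilon)$. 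With $\varepsilon<\eta/9$ this is below $\tfrac12+\eta$, giving the contradiction. Tracking the constants so that the losses total less than $\eta$ (roughly $8\varepsilon$ from $K^2$ plus the leftover) is where I expect the factor $\tfrac19$ to come from.
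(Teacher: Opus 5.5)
\textbf{There is a genuine gap.}

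Your opening matches the paper. You use the disjointness $(F+\I(H))\cap(F+\1)=\varnothing$ and the fact $\varnothing\in\I(H)$. You then apply Pl\"unnecke's inequality in Petridis' form to get a nonempty $X\subseteq F$ with $\mu(X+\I(H)+\I(H))\le K^2\mu(X)$, where $K\le(1-\alpha)/\alpha$. You also locate the real difficulty correctly: $X$ may be tiny, while the hypothesis only concerns sets of density at least $\tfrac12-\eta$. None of your proposed ways around this works.

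\begin{itemize}
\item The greedy extraction of disjoint pieces $X_i$ of $F$ fails for the reason you compute. To reach $\mu(U)\ge\tfrac12-\eta$ you must extract pieces when only about $\eta$ of the mass of $F$ remains. There the available bound $K_i\le\mu(F+\I(H))/\mu(F_i)$ is about $1/(2\eta)$. For small $\eta$ the resulting bound on $\mu(U+\I(H)+\I(H))$ is far above $\tfrac12+\eta$.
\item Stopping earlier leaves $U$ too small for the hypothesis to apply.
\item Applying the hypothesis to $F$ itself would need $\mu(F+\I(H)+\I(H))\le\tfrac12+O(\varepsilon)$. Pl\"unnecke's inequality does not give this: it controls the iterated sumset only for some subset $X$. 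There is no useful bound on $R+\I(H)+\I(H)$ for the leftover $R$; the trivial one is $\mu(R)\lvert\I(H)+\I(H)\rvert$.
\item The minimality property of Petridis' $X$ says nothing about $F\setminus X$.
\end{itemize}

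\textbf{The missing ingredient} is a transfer from expansion of dense sets to expansion of all sets. The paper imports this as Lemma~2.2 of Han and Wang. Since $\I(H)+\I(H)$ generates $\mathbb{F}_2^{V(H)}$ (it contains all singletons), the hypothesis implies $\lvert X+\I(H)+\I(H)\rvert\ge(1+2\eta)\lvert X\rvert$ for the set $X\subseteq F$ produced by Pl\"unnecke, however small. Comparing with $\lvert X+\I(H)+\I(H)\rvert\le((1-\alpha)/\alpha)^2\lvert X\rvert$ gives $\alpha\le\tfrac12-\tfrac{\eta}{9}$ at once. Your estimate $K^2\le 1+9\varepsilon$ is indeed where the constant comes from, with room to spare.

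Your proposal never uses the generation property or translation invariance, and these are what make such a transfer possible. The dense test set should be built from translates $X+t$ of the single good set $X$, using $(X+t)+\I(H)+\I(H)=(X+\I(H)+\I(H))+t$. Disjoint pieces of $F$, whose expansion ratios deteriorate, are the wrong building blocks. A heuristic sketch: let $A$ be a union of translates of $X$ and $t$ uniform. Adding $X+t$ raises $\mu(A)$ by $\mu(X)(1-\mu(A))$ in expectation, and raises $\mu(A+\I(H)+\I(H))$ by $\mu(X+\I(H)+\I(H))\bigl(1-\mu(A+\I(H)+\I(H))\bigr)$. So, at least when $\mu(X)$ is small, one can grow $A$ to density $\tfrac12-\eta$ while keeping $1-\mu(A+\I(H)+\I(H))$ at roughly $(1-\mu(A))^{K^2}$ or more. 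When $K^2<1+\eta$ this exceeds $\tfrac12-\eta$ at that point, contradicting the hypothesis.
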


\begin{proof}
Let \(F\) be a nonempty \(H\)-intersecting family of density \(\alpha\). 
As noted above, we have \((F+\I(H))\cap(F+\1)=\varnothing\).
As \(\varnothing\in\I(H)\) we have $\alpha \le 1/2$
and $\frac{|F+\I(H)|}{|F|} \le\frac{1-\alpha}{\alpha}$.
By Pl\"unnecke's inequality, $F$ has a nonempty subset $X$
with \(  |X+\I(H)+\I(H)| \le
  \left(\frac{1-\alpha}{\alpha}\right)^2|X| \).
By \cite[Lemma 2.2]{HanWang2026}, which can be applied
as $\I(H)+\I(H)$ generates $\mathbb{F}_2^{V(H)}$,
we deduce \(|X+\I(H)+\I(H)|\ge(1+2\eta)|X|\).
We deduce $\alpha \le \tfrac12-\tfrac{\eta}{9}$.
\end{proof}

Our final ingredient is an estimate for $\Prob\bigl(Y_P\notin\I(H)+\I(H)\bigr)$,
which plays the same role in our proof as the estimate of the probability 
that $Y_P$ contains a 4AP plays in Gillott's proof. For fixed $\Delta$ our
bound is $O_\Delta(K^2) < K^{3/2}$ for small $K$.

\begin{lemma}\label{lem:splitting}
Let \(H\) be a $3$-graph with \(\Delta_2(H)\le\Delta\).
Write $\I(H)$ for the family of independent sets in $H$.
Let \(Y_P\subseteq V(H)\) retain the vertices independently 
with probabilities \((p_v)\) and write \(K=\sum_{v\in V(H)}p_v^2\).
If \(\Delta K<1\) then
\(
  \Prob\bigl(Y_P\notin\I(H)+\I(H)\bigr)
  \le \frac{(\Delta K)^2}{1-\Delta K}.
\)
\end{lemma}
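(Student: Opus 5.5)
The plan is to find a concrete combinatorial obstruction that must appear inside $Y_P$ whenever $Y_P\notin\I(H)+\I(H)$, and then bound the probability of that obstruction by a first-moment (union) bound over configurations. The target bound is a geometric series starting at the square of the per-edge weight, $\sum_{m\ge2}(\Delta K)^m$.

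\emph{Reduction to 2-colourings.} Over $\mathbb{F}_2$, if $Y=I_1\cup I_2$ with $I_1,I_2$ disjoint independent sets, then $Y=I_1+I_2\in\I(H)+\I(H)$. Hence $Y_P\notin\I(H)+\I(H)$ implies that the induced $3$-graph $H[Y_P]$ has no proper $2$-colouring. (Overlapping pairs $I_1\cap I_2\neq\varnothing$ give extra freedom, but I will not need it.) So it suffices to bound $\Prob\bigl(H[Y_P]\text{ is not }2\text{-colourable}\bigr)$.

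\emph{Structural step.} Next I would show that non-$2$-colourability forces $Y_P$ to contain a connected chain of edges $e_1,\dots,e_m$ with $m\ge 2$ and the following property: each $e_{j+1}$ meets $e_1\cup\dots\cup e_j$ in a specified vertex and brings in at least one new vertex. Note that a single edge, and any ``hypertree'' built by attaching edges along one vertex, is always $2$-colourable. The idea is to colour greedily along a breadth-first exploration of each component of $H[Y_P]$, so that failure can only occur once the exploration closes up. One then truncates the exploration at the first closing edge to produce a certificate. Certificates of length $m$ are counted by choosing a root vertex and then, at each step, an attachment vertex and the new vertices.

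\emph{Probability step.} Fix an already-present vertex $a$. The weight of all edges through $a$, measured by the new vertices they bring in, is
\[
  \sum_{\{a,b,c\}\in E(H)}p_bp_c
  \;\le\;\sum_{\{a,b,c\}\in E(H)}\tfrac12\bigl(p_b^2+p_c^2\bigr)
  \;\le\;\Delta K,
\]
since each $b$ lies with $a$ in at most $\Delta$ edges. When an edge adds only one new vertex $b$, the weight is at most $\sum_b p_b\cdot(\text{number of such edges})$, and this must again be controlled through $p_b\le1$ and the codegree bound. Multiplying the step weights along the certificate gives $(\Delta K)^m$ for length $m$, and summing over $m\ge2$ gives $\frac{(\Delta K)^2}{1-\Delta K}$.

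\emph{Main obstacle.} The hard part is making the structural step and the counting compatible. The certificate must be rich enough that every non-$2$-colourable $H[Y_P]$ contains one; for example, linear configurations such as a Fano plane are non-$2$-colourable. At the same time it must be rooted cheaply: a sum over all first edges $\sum_e p_ap_bp_c$ is not obviously bounded by $\Delta K$, because vertex degrees are unbounded. My first attempt would root the chain at a pair of edges sharing a vertex, charging $p_a^2$ to the shared vertex via $p_ap_a\le p_a^2$, so that $\sum_a p_a^2=K$ absorbs the root. Every later edge would then be charged $\Delta K$ as above. If linear obstructions break this, I would exploit the overlap freedom $Y=(A\cup Z)+(B\cup Z)$ to repair colourings locally, so that only the chain-type certificates need to be counted.
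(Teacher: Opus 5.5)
\textbf{There is a genuine gap.} Your reduction to $2$-colourability of $H[Y_P]$ is fine and matches the paper. So is the intuition that greedy colouring can only fail once the exploration ``closes up''. The gaps are in both the structural step and the counting step.

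\emph{Structural step.} The certificate must be the closing cycle alone, not the explored hypertree truncated at the first closing edge. Tree-like configurations are typically present. For example, for 3APs with $p_v=\sqrt{K/n}$, the expected number of edges inside $Y_P$ is of order $K^{3/2}\sqrt n$, and similarly for pairs of edges through a common vertex. So no certificate that contains the tree part can have small probability. The right obstruction is a cycle in the incidence graph of $H[Y_P]$. A $3$-graph whose incidence graph is a forest is $2$-colourable greedily. A shortest cycle $x_1,e_1,\dots,x_\ell,e_\ell,x_1$ with $e_i=\{x_i,z_i,x_{i+1}\}$ and $\ell\ge 2$ has all $x_i,z_i$ distinct, so it occurs with probability exactly $\prod_i p_{x_i}p_{z_i}$. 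This also removes your worry about linear obstructions such as the Fano plane: they contain such cycles (three lines meeting pairwise in three distinct points). No local repair using overlaps $I_1\cap I_2$ is needed.

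\emph{Counting step.} Your counting cannot produce $(\Delta K)^m$. Rooting at a vertex costs $\sum_a p_a$, which is unbounded. The proposed fix of ``charging $p_a^2$ via $p_ap_a\le p_a^2$'' is invalid: a vertex shared by two edges contributes a single factor $p_a$ to the probability. Indeed two edges sharing a vertex is a typical event, so no bound of order $K^2$ for it can hold. Likewise the closing edge, which brings in only one new vertex, has weight $\sum_z p_z\le\Delta$, not $O(K)$.

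\emph{Missing idea.} These two defects cancel exactly on a cycle. Each $x_i$ lies in exactly two cycle edges, so split $p_{x_i}=\sqrt{p_{x_i}}\cdot\sqrt{p_{x_i}}$ between them. This gives the step $x\to y$ the symmetric weight $M_{xy}=\sqrt{p_xp_y}\sum_{z:xyz\in E(H)}p_z$, with $M_{xx}=0$. The total probability of shortest cycles of length $\ell$ is then at most $\tr M^\ell\le(\tr M^2)^{\ell/2}$. Moreover $\tr M^2=\sum_{x\ne y}p_xp_y\bigl(\sum_{z:xyz\in E(H)} p_z\bigr)^2\le\Delta^2K^2$. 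This follows from Cauchy--Schwarz, $\bigl(\sum_z p_z\bigr)^2\le\Delta\sum_z p_z^2$, together with your own estimate $\sum_{\{b,c\}:abc\in E(H)}p_bp_c\le\Delta K/2$. Summing over $\ell\ge 2$ gives $\frac{(\Delta K)^2}{1-\Delta K}$. Your geometric series over $m\ge 2$ therefore has the right shape, but it has to be indexed by cycle length and weighted through $M$, not by chain length with one-sided step weights.
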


We prove this lemma in the next section. First we complete
the deduction of Theorem \ref{thm:main}, which is as above.

\begin{proof}[Proof of Theorem \ref{thm:main}]
Fix $\Delta$ and choose $0<K<1/\Delta$ 
such that $\frac{(\Delta K)^2}{1-\Delta K} < K^{3/2}$.
Let $\eta$ be as in Lemma \ref{lem:gillott-param}.
Let $H$ be a $3$-graph with  \(\Delta_2(H)\le\Delta\).
Let $F \subseteq\mathbb{F}_2^{V(H)}\) with \(\mu(F)\ge1/2-\eta\).
Let $A$ be the set of $x$ such that $P$ as in  Lemma \ref{lem:gillott-param} exists.
Then $\mu(A) \ge 1/2 + \eta$. By Lemma \ref{lem:splitting} 
we have $\Prob\bigl(Y_P\notin\I(H)+\I(H)\bigr) < K^{3/2}$.
Thus the events \(\{x+Y_P\in F\}\) and \(\{Y_P\in \I(H)+\I(H)\}\) intersect,
so $x \in F+\I(H)+\I(H)$. We deduce that $\mu(F+\I(H)+\I(H)) \ge \mu(A)  \ge 1/2 + \eta$. 
The theorem now follows by Lemma \ref{lem:additive-reduction}.
\end{proof}

\section{Proof of the lemma}\label{sec:noise}

It remains to prove Lemma \ref{lem:splitting}. Our first observation is that it suffices
to bound the probability that \(H[Y_P]\) contains a cycle in its incidence graph.
Indeed, if $H[Y_P]$ is a forest then it is $2$-colourable (by a greedy algorithm),
so $Y_P \in\I(H)+\I(H)$. 

To estimate this probability, we consider a shortest cycle in $H[Y_P]$
and write it as $x_1,e_1,x_2,e_2,\ldots,x_\ell,e_\ell,x_1$, where each
\(e_i=\{x_i,z_i,x_{i+1}\}\), reading indices modulo \(\ell\). 
Minimality implies that the $x_i$ and $z_i$ are all distinct. 

We bound the probability of seeing such a cycle by 
$\sum_{\ell \ge 2} \tr M^\ell$, where $M$ is the symmetric matrix with
$M_{xx}=0$ and $M_{xy} = \sqrt{p_xp_y}\sum_{z: xyz \in E(H)}p_z$.

Let \(\lambda_j\) be the eigenvalues of \(M\). Then
$\tr M^\ell = \sum_j \lambda_j^\ell \le\left(\sum_j\lambda_j^2\right)^{\ell/2}$,
where $\sum_j\lambda_j^2 = \tr M^2 = \sum_{x\ne y}p_xp_y
    \left(\sum_{z: xyz \in E(H) }p_z\right)^2 \le   
    2\Delta \sum_z p_z^2 \sum_{xy: xyz \in E(H)} p_xp_y
    \le \Delta^2K^2 $
by the codegree bounds and Cauchy-Schwarz.
The last inequality uses $p_x p_y \le (p_x^2+p_y^2)/2$
and the codegree bound. The lemma follows.

\subsection*{Statement on AI use}

This proof was found by GPT-6 Astra following a hint by the author.
The author simplified and rewrote the proof.

\end{document}